\title{Bounds on the nonnegative signed domination number of graphs}
\author {
Doost Ali Mojdeh, Babak Samadi\thanks{Corresponding author}\\
Department of Mathematics\\
University of Mazandaran, Babolsar, Iran\\
{\tt damojdeh@umz.ac.ir}\\
{\tt samadibabak62@gmail.com$^*$}\\
and\\
Lutz Volkmann\\
Lehrstuhl II f\"{u}r Mathematik\\
RWTH Aachen University, 52056 Aachen, Germany\\
{\tt volkm@math2.rwth-aachen.de}\vspace{3mm}\\
}
\date{}
\newtheorem{theorem}{Theorem}[section]
\newtheorem{lemma}[theorem]{Lemma}
\newtheorem{Observation}[theorem]{Observation}
\theoremstyle{definition}
\begin{document}

\maketitle

\begin{abstract}

\noindent \ \ The aim of this work is to investigate the nonnegative signed domination number $\gamma^{NN}_s$ with emphasis on regular, ($r+1$)-clique-free graphs and trees. We give lower and upper bounds on $\gamma^{NN}_s$ for regular graphs and prove that $n/3$ is the best possible upper bound on this parameter for a cubic graph of order $n$, specifically. As an application of the classic theorem of Tur\'{a}n we bound $\gamma^{NN}_s(G)$ from below, for an ($r+1$)-clique-free graph $G$ and characterize all such graphs for which the equality holds, which corrects and generalizes a result for bipartite graphs in [Electron. J. Graph Theory Appl. 4 (2) (2016), 231--237], simultaneously. Also, we bound $\gamma^{NN}_s(T)$ for a tree $T$ from above and below and characterize all trees attaining the bounds.\vspace{.5mm}\\
\noindent
{\bf Keywords:} $k$-limited packing number, nonnegative signed domination number, ($r+1$)-clique-free graph.\vspace{.5mm}\\
{\bf MSC 2010}: 05C69.
\end{abstract}

%%%%%%%%%%%%%%%%%%%%%%%%%%%%%%%%%%%%%%%%%%%

\section{Introduction and preliminaries}

\ \ Throughout this paper, let $G$ be a finite simple graph with vertex set $V=V(G)$ and edge set $E=E(G)$. We use \cite{w} as a reference for terminology and notation which are not defined here. The {\em open neighborhood} of a vertex $v$ is denoted by $N(v)$, and the {\em closed neighborhood} of $v$ is $N[v]=N(v)\cup\{v\}$. The {\em minimum} and {\em maximum degree} of $G$ are respectively denoted by $\delta=\delta(G)$ and $\Delta=\Delta(G)$. The {\em corona} of two graphs $G_{1}$ and $G_{2}$ is the graph $G_{1}\circ G_{2}$ formed from one copy of $G_{1}$ and $|V(G_{1})|$ copies of $G_{2}$ where the $ith$ vertex of $G_{1}$ is adjacent to every vertex in the $ith$ copy of $G_{2}$.

Let $S\subseteq V(G)$. For a real-valued function $f:V(G)\rightarrow \mathbb{R}$ we define $f(S)=\sum_{v\in S}f(v)$. Also, $\omega(f)=f(V(G))$ is the weight of $f$. A {\em signed dominating function} ({\em signed 2-independence function}), abbreviated SDF (S2IF), of $G$ is a function $f:V(G)\rightarrow\{-1,1\}$ such that $f(N[v])\geq1$ ($f(N[v])\leq1$), for every $v\in V(G)$. The {\em signed domination number} ({\em signed 2-independence number}), abbreviated  SDN (S2IN), of $G$ is $\gamma_{s}(G)=\min\{f(V(G))|f \mbox{ is a SDF of}\ G\}$ ($\alpha_{s}^{2}(G)=\max\{f(V(G))|f \mbox{ is a S2IF of}\ G\}$). These parameters were introduced in \cite{dthhs} and \cite{z}, respectively.

A set $S\subseteq V(G)$ is a {\em dominating set} if each vertex in $V(G)\backslash S$ has at least one neighbor in $S$. %The {\em domination number} $\gamma(G)$ is the minimum cardinality of a dominating set \cite{hhs}. A subset $B\subseteq V(G)$ is a {\em $2$-packing} in $G$ if for every pair of vertices $u,v\in B$, $d(u,v)\geq3$. The {\em $2$-packing number} (or {\em packing number}) $\rho(G)$ is the maximum cardinality of a 2-packing in $G$.\\
Gallant et al. \cite{gghr} introduced the concept of {\em limited packing} in graphs. They exhibited some real-world applications of it to network security, NIMBY, market saturation and codes. In fact as it is defined in \cite{gghr}, a set of vertices $B\subseteq V(G)$ is called a {\em $k$-limited packing} in $G$ provided that for all $v\in V(G)$, we have $|N[v]\cap B|\leq k$. The {\em limited packing number}, denoted $L_{k}(G)$, is the largest number of vertices in a $k$-limited packing set. In \cite{hh}, Harary and Haynes introduced the concept of {\em tuple domination} in graphs. A set $D\subseteq V(G)$ is a {\em $k$-tuple dominating set} in $G$ if $|N[v]\cap D|\geq k$, for all $v\in V(G)$. The {\em $k$-tuple domination number}, denoted $\gamma_{\times k}(G)$, is the smallest number of vertices in a $k$-tuple dominating set. In fact the authors showed that every graph $G$ with $\delta\geq k-1$ has a $k$-tuple dominating set and hence a $k$-tuple domination number.

A function $f:V(G)\rightarrow\{-1,1\}$ is said to be a {\em nonnegative signed dominating function} (NNSDF) of $G$ if $f(N[v])\geq0$ for each $v\in V(G)$. The {\em nonnegative signed domination number} (NNSDN) of $G$, $\gamma^{NN}_{s}(G)$, is the minimum weight of an NNSDF of $G$. This concept was introduced in \cite{hfx}. For more information the reader can consult \cite{as}.

In this paper, we continue the investigating of the concept of nonnegative signed domination in graphs. In section $2$, we present sharp lower and upper bounds on NNSDN of regular graphs, by using the properties of the above graph parameters. Specifically, we prove that $\gamma^{NN}_{s}(G)\leq n/3$ for a cubic graph $G$ of order $n$. In section $3$, we show that the lower bound $2(-1+\sqrt{1+2n})-n$ for NNSDN of a bipartite graph $G$ of order $n$, given in \cite{as}, is not true as it stands. We correct it by giving a more general result on ($r+1$)-clique-free graphs ($r\geq2$) as an application of the well-known theorem of Tur\'{a}n from the extremal graph theory. Also, we characterize all such graphs attaining the new bound. Finally, in section 4 we give lower and upper bounds on NNSDN with emphasis on trees as:
\begin{center}$-n+2\lceil\frac{\Delta+1}{2}\rceil\leq \gamma^{NN}_s(T)$,\ and $\gamma^{NN}_s(T)\leq n-\ell-s'$ for $n\geq3$\end{center}
where $\ell$ and $s'$ are the number of leaves and the support vertices with odd number of leaves, respectively. Moreover, we give the characterizations of all trees attaining these bounds.

For convenience, throughout the paper we make use of the following notation. Let $f:V(G)\rightarrow\{-1,1\}$ be an NNSDF of graph $G$. Define $V_{+}$ and $V_{-}$ as the set of all vertices of $G$ that are assigned $1$ and $-1$ under $f$, respectively. We consider $[V_{-},V_{+}]$ as the set of edges having one end point in $V_{-}$ and the other in $V_{+}$.

%%%%%%%%%%%%%%%%%%%%%%%%%%%%%%%%%%%%%%%%%%%%%%%%%%%%%%%%%%%%%%%%%%%%%%

\section{Regular graphs}

\ \ Favaron \cite{f} and Wang \cite{wa} proved that for any $r$-regular graph $G$,
\begin{equation}\label{EQ1}
\gamma_{s}(G)\leq\left \{
\begin{array}{lll}
(\frac{r+1}{r+3})n & \mbox{if} & r\equiv0\ {mod\ 2} \vspace{1.5mm}\\
(\frac{(r+1)^{2}}{r^{2}+4r-1})n & \mbox{if} & r\equiv1\ (mod\ 2)
\end{array}
\right.(\cite {f})
\end{equation}
and
\begin{equation}\label{EQ2}
\alpha_{s}^{2}(G)\geq\left \{
\begin{array}{lll}
(\frac{-r^{2}+r+2}{r^{2}+r+2})n & \mbox{if} & r\equiv0\ (mod\ 2) \vspace{1.5mm}\\
(\frac{1-r}{1+r})n & \mbox{if} & r\equiv1\ (mod\ 2).
\end{array}
\right.(\cite{wa})
\end{equation}
Moreover, they showed that these bounds are sharp. Also, the following sharp lower and upper bounds on $\gamma_{s}(G)$ and $\alpha_{s}^{2}(G)$ of an $r$-regular graph $G$ were given in \cite{dthhs,hw}, and \cite{z}, respectively.
\begin{equation}\label{EQ3}
\gamma_{s}(G)\geq\left \{
\begin{array}{lll}
\frac{n}{r+1} & \mbox{if} & r\equiv0\ (mod\ 2) \vspace{1.5mm}\\
\frac{2n}{r+1} & \mbox{if} & r\equiv1\ (mod\ 2)
\end{array}
\right.(\cite {dthhs,hw})
\end{equation}
and
\begin{equation}\label{EQ4}
\alpha_{s}^{2}(G)\leq\left \{
\begin{array}{lll}
\frac{n}{r+1} & \mbox{if} & r\equiv0\ (mod\ 2) \vspace{1.5mm}\\
0 & \mbox{if} & r\equiv1\ (mod\ 2).
\end{array}
\right.(\cite {z})
\end{equation}\label{EQ5}
We are now in a position to exhibit the main theorem of this section.
\begin{theorem}\label{T2.1}
Let $G$ be an $r$-regular graph of order $n$. Then\vspace{1mm}\\
\emph{(i)}\ $\gamma^{NN}_{s}(G)=n-2L_{\lfloor\frac{r+1}{2}\rfloor}(G)$.\vspace{1mm}\\
\emph{(ii)}\ $\gamma^{NN}_{s}(G)=2\gamma_{\times\lceil\frac{r+1}{2}\rceil}(G)-n$.
\end{theorem}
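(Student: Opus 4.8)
The plan is to read the defining condition of an NNSDF directly off the partition $(V_-,V_+)$ and to recognize it as a packing constraint on one side and a covering constraint on the other, exploiting that in an $r$-regular graph every closed neighborhood has exactly $r+1$ vertices. Fix an NNSDF $f$ and a vertex $v$, and write $k_v=|N[v]\cap V_-|$. Then
\[
f(N[v])=|N[v]\cap V_+|-|N[v]\cap V_-|=(r+1)-2k_v,
\]
so $f(N[v])\geq0$ is equivalent to $k_v\leq (r+1)/2$, and since $k_v$ is an integer this is precisely $|N[v]\cap V_-|\leq\lfloor\frac{r+1}{2}\rfloor$ for every $v$; that is, $V_-$ is a $\lfloor\frac{r+1}{2}\rfloor$-limited packing. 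Conversely, any such packing $B$ yields an NNSDF by assigning $-1$ on $B$ and $1$ elsewhere, so the NNSDFs of $G$ correspond bijectively to the $\lfloor\frac{r+1}{2}\rfloor$-limited packings.

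For part (i), I would note that $\omega(f)=|V_+|-|V_-|=n-2|V_-|$, so minimizing the weight is the same as maximizing $|V_-|$ over all NNSDFs, which by the correspondence above equals the largest size of a $\lfloor\frac{r+1}{2}\rfloor$-limited packing, namely $L_{\lfloor\frac{r+1}{2}\rfloor}(G)$. This gives $\gamma^{NN}_{s}(G)=n-2L_{\lfloor\frac{r+1}{2}\rfloor}(G)$.

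For part (ii), I would rewrite the very same neighborhood inequality in terms of $V_+$. Since $|N[v]\cap V_+|=(r+1)-k_v$, the condition $f(N[v])\geq0$ reads $|N[v]\cap V_+|\geq (r+1)-\lfloor\frac{r+1}{2}\rfloor=\lceil\frac{r+1}{2}\rceil$, so $V_+$ is a $\lceil\frac{r+1}{2}\rceil$-tuple dominating set, and conversely. Using $\omega(f)=2|V_+|-n$, minimizing the weight amounts to minimizing $|V_+|$, which equals $\gamma_{\times\lceil\frac{r+1}{2}\rceil}(G)$, yielding $\gamma^{NN}_{s}(G)=2\gamma_{\times\lceil\frac{r+1}{2}\rceil}(G)-n$. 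Here the existence of such a tuple dominating set is guaranteed because $\delta=r\geq\lceil\frac{r+1}{2}\rceil-1$.

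The computations are routine; the only real content is the observation that the NNSDF constraint splits cleanly according to the parity of $r$, and the two formulations are mutually consistent because $\lfloor\frac{r+1}{2}\rfloor+\lceil\frac{r+1}{2}\rceil=r+1$ forces the optimal sets $V_-$ and $V_+$ to be complementary. The main point I would check carefully is that the inequality $k_v\leq (r+1)/2$ is rounded faithfully to $\lfloor\frac{r+1}{2}\rfloor$ (and its complement to $\lceil\frac{r+1}{2}\rceil$), so that no integrality is lost in passing between the two descriptions.
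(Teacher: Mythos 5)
Your proof is correct and takes essentially the same approach as the paper's: both translate the NNSDF condition in an $r$-regular graph (where $|N[v]|=r+1$) into the statement that $V_-$ is a $\lfloor\frac{r+1}{2}\rfloor$-limited packing and $V_+$ is a $\lceil\frac{r+1}{2}\rceil$-tuple dominating set, then read the weight off as $n-2|V_-|=2|V_+|-n$. The only cosmetic difference is that you package the correspondence as a single bijection, whereas the paper argues each equality via two inequalities (constructing an NNSDF from an optimal packing/dominating set, and extracting a packing/dominating set from a minimum NNSDF), which is the same mathematical content.
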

\begin{proof}
(i) Let $B$ be a maximum $\lfloor\frac{r+1}{2}\rfloor$-limited packing in $G$. We define $f:V(G)\rightarrow\{-1,1\}$ by
$$f(v)=\left \{
\begin{array}{lll}
-1 & \mbox{if} & v\in B \\
\ 1 & \mbox{if} & v\in V(G)\setminus B.
\end{array}
\right.$$
Then $f(N[v])=|N[v]\cap(V(G)\setminus B)|-|N[v]\cap B|=|N[v]|-2|N[v]\cap B|\geq r+1-2\lfloor\frac{r+1}{2}\rfloor\geq0$, for all $v\in V(G)$. Therefore, $f$ is an NNSDF of $G$. So,
\begin{equation*}
\gamma^{NN}_{s}(G)\leq f(V(G))=n-2L_{\lfloor\frac{r+1}{2}\rfloor}(G).
\end{equation*}

If $f$ is a minimum NNSDF, then $|N[v]\cap V_{-}|\leq \lfloor\frac{r+1}{2}\rfloor$, for all $v\in V(G)$. Thus, $V_{-}$ is a $\lfloor\frac{r+1}{2}\rfloor$-limited packing in $G$. Therefore, 
\begin{equation*}
(n-\gamma^{NN}_{s}(G))/2=|V_{-}|\leq L_{\lfloor\frac{r+1}{2}\rfloor}(G).
\end{equation*}
So, the first equality holds.\\
(ii) Suppose that $D$ is a minimum $\lceil\frac{r+1}{2}\rceil$-tuple dominating set in $G$. We define $f:V(G)\rightarrow\{-1,1\}$ by
$$f(v)=\left \{
\begin{array}{lll}
\ 1 & \mbox{if} & v\in D \\
-1 & \mbox{if} & v\in V(G)\setminus D.
\end{array}
\right.$$
Then $f(N[v])=|N[v]\cap D|-|N[v]\cap(V(G)\setminus D)|=2|N[v]\cap D|-|N[v]|\geq 2\lceil\frac{r+1}{2}\rceil-r-1\geq0$, for each vertex $v$. So, $f$ is an NNSDF of $G$. This shows that 
\begin{equation*}
\gamma^{NN}_{s}(G)\leq f(V(G))=2\gamma_{\times\lceil\frac{r+1}{2}\rceil}(G)-n.
\end{equation*}

Let $f$ be a minimum NNSDF of $G$. Then $|N[v]\cap V_{+}|\geq \lceil\frac{r+1}{2}\rceil$, for each vertex $v$. Hence $V_{+}$ is a $\lceil\frac{r+1}{2}\rceil$-tuple dominating set in $G$. It follows that
\begin{equation*}
(n+\gamma^{NN}_{s}(G))/2=|V_{+}|\geq \gamma_{\times\lceil\frac{r+1}{2}\rceil}(G).
\end{equation*}
This completes the proof of (ii).
\end{proof}
If $f$ is an NNSDF of $G$, then $f(N[v])\geq1$ for each vertex $v$ of even degree. Hence,
\begin{equation}
\gamma_s(G)=\gamma^{NN}_s(G)
\end{equation}
when $G$ is an $r$-regular graph and $r$ is even. Similar to Part (ii) of Theorem \ref{T2.1}, an analogous equality for $\alpha_{s}^{2}(G)$ can be proved as follows:
$$\alpha_{s}^{2}(G)=n-2\gamma_{\times\lceil\frac{r}{2}\rceil}(G).$$ 
Therefore,
\begin{equation}\label{EQ6}
\gamma^{NN}_s(G)=-\alpha_{s}^{2}(G)
\end{equation}
when $G$ is an $r$-regular graph and $r$ is odd.\\
By Theorem \ref{T2.1} and the inequalities (1)--(\ref{EQ6}), we conclude the following theorem.
\begin{theorem}\label{T2.2}
For any $r$-regular graph $G$ of order $n$,
$$\left(\frac{1}{r+1}\right)n\leq \gamma^{NN}_{s}(G)
\leq\left(\frac{r+1}{r+3}\right)n,\ \ \ r\equiv0\ (mod\ 2)$$
and
$$0\leq \gamma^{NN}_{s}(G)\leq\left(\frac{r-1}{r+1}\right)n,\ \ \ r\equiv1\ (mod\ 2).$$
Furthermore, these bounds are sharp.
\end{theorem}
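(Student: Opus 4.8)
The plan is to dispose of the two parity cases separately, in each case reducing $\gamma^{NN}_s(G)$ to a parameter whose regular-graph bounds are already recorded above, via the two identities established immediately before the statement. First I would treat the even case. Since $G$ is $r$-regular with $r$ even, every closed neighbourhood has odd cardinality $r+1$, so an NNSDF automatically satisfies $f(N[v])\geq 1$, whence $\gamma^{NN}_s(G)=\gamma_s(G)$ (the first of the two displayed identities preceding the theorem). Substituting this identity into the known bounds for $\gamma_s$ on even regular graphs does the rest: the lower bound $\gamma_s(G)\geq n/(r+1)$ from (\ref{EQ3}) yields $\gamma^{NN}_s(G)\geq n/(r+1)$, while the Favaron upper bound $\gamma_s(G)\leq \frac{r+1}{r+3}n$ from (\ref{EQ1}) yields $\gamma^{NN}_s(G)\leq \frac{r+1}{r+3}n$.

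Next I would treat the odd case through the companion identity $\gamma^{NN}_s(G)=-\alpha^2_s(G)$ from (\ref{EQ6}). Negating the upper bound $\alpha^2_s(G)\leq 0$ from (\ref{EQ4}) gives $\gamma^{NN}_s(G)\geq 0$, and negating the Wang lower bound $\alpha^2_s(G)\geq \frac{1-r}{1+r}n$ from (\ref{EQ2}) gives $\gamma^{NN}_s(G)\leq \frac{r-1}{r+1}n$. The one bookkeeping point to watch is that the sign reversal interchanges the roles of the lower and upper bounds, so the $\alpha^2_s$-upper bound supplies the $\gamma^{NN}_s$-lower bound and vice versa.

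The only genuinely substantive part is sharpness, and this is where I would spend the effort. Because the two identities are \emph{exact} and hold across the entire class of $r$-regular graphs, sharpness transfers verbatim from the constituent parameters: any even regular graph extremal for the $\gamma_s$-bounds (\ref{EQ1}), (\ref{EQ3}) is automatically extremal for the corresponding $\gamma^{NN}_s$-bound, and any odd regular graph extremal for the $\alpha^2_s$-bounds (\ref{EQ2}), (\ref{EQ4}) is automatically extremal for the matching $\gamma^{NN}_s$-bound. Thus the remaining work is to confirm that the extremal families exhibited in \cite{f,wa,dthhs,hw,z} are indeed $r$-regular (so the identities genuinely apply) and, ideally, to record one concrete small witness for each of the four bounds, for instance complete graphs or suitable circulants, so that the assertion ``these bounds are sharp'' is supported by explicit examples and not merely by the transfer principle. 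I expect this verification of the examples to be the main obstacle, since the constituent bounds are sharp only for specific regular graphs and one must check that the same graphs satisfy the extra constraints forced by $\gamma^{NN}_s$.
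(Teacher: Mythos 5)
Your proposal is correct and follows essentially the same route as the paper, which derives the theorem in one line from the parity identity $\gamma^{NN}_s(G)=\gamma_s(G)$ for even $r$ and the identity $\gamma^{NN}_s(G)=-\alpha^2_s(G)$ for odd $r$, combined with the known sharp bounds (1)--(4); your observation that negation swaps the roles of upper and lower bounds is exactly the mechanism at work. Your transfer-of-sharpness argument is also the paper's implicit justification, since the extremal graphs for (1)--(4) are by construction $r$-regular, so the exact identities apply to them verbatim.
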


Balister et al. \cite{bbg} proved that if $G$ is a cubic graph of order $n$, then $L_{2}(G)\geq n/3$.  Taking into account this fact and using the first part of Theorem \ref{T2.1}, the upper bound $n/2$ given in Theorem \ref{T2.2} for a cubic graph $G$ of order $n$ can be improved as follows.
\begin{theorem}\label{T2.3}
If $G$ is a cubic graph of order $n$, then $\gamma^{NN}_{s}(G)\leq n/3$.
\end{theorem}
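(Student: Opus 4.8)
The plan is to read Theorem~\ref{T2.3} as a direct specialization of the first equality in Theorem~\ref{T2.1} combined with the packing bound of Balister, Bollob\'{a}s and collaborators already quoted in the paragraph preceding the statement. First I would set $r=3$ in Theorem~\ref{T2.1}(i). Since $\lfloor\frac{r+1}{2}\rfloor=\lfloor\frac{4}{2}\rfloor=2$, the identity there becomes
\begin{equation*}
\gamma^{NN}_{s}(G)=n-2L_{2}(G),
\end{equation*}
so the entire problem is converted into a lower bound on the $2$-limited packing number $L_2(G)$ of a cubic graph. No new NNSDF needs to be constructed by hand here, because Part~(i) of Theorem~\ref{T2.1} already encodes the optimal correspondence between minimum NNSDFs and maximum $2$-limited packings via the assignment $f\equiv-1$ on the packing and $f\equiv+1$ elsewhere.

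The second and final step is to invoke the result of Balister et al.~\cite{bbg}, namely $L_{2}(G)\geq n/3$ for every cubic graph $G$ of order $n$, and substitute it into the displayed identity. This yields
\begin{equation*}
\gamma^{NN}_{s}(G)=n-2L_{2}(G)\leq n-2\cdot\frac{n}{3}=\frac{n}{3},
\end{equation*}
which is exactly the claimed bound. The direction of the inequality is the one we want: a larger packing number forces a smaller nonnegative signed domination number, and the best available lower bound on $L_2(G)$ therefore produces the best corresponding upper bound on $\gamma^{NN}_{s}(G)$.

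In effect there is no genuine obstacle left internal to this theorem: the two substantive ingredients, the packing--NNSDF equivalence and the extremal estimate $L_2(G)\geq n/3$, are both established or cited earlier, and the remaining work is a one-line arithmetic substitution. The only point worth flagging is that the whole strength of the result is inherited from \cite{bbg}; the improvement from the generic regular-graph bound $n/2$ of Theorem~\ref{T2.2} down to $n/3$ is precisely the gap between the trivial packing estimate and the sharp cubic one. I would close by remarking that, as announced in the abstract, this bound is best possible, so a complementary construction exhibiting a cubic graph with $\gamma^{NN}_{s}(G)=n/3$ (equivalently, $L_2(G)=n/3$) should accompany the statement to justify the sharpness claim.
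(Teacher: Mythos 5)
Your proposal is correct and follows exactly the paper's own argument: specialize Theorem~\ref{T2.1}(i) with $r=3$ to get $\gamma^{NN}_{s}(G)=n-2L_{2}(G)$, then apply the bound $L_{2}(G)\geq n/3$ of Balister et al.~\cite{bbg}. Your closing remark about sharpness is also consistent with the paper, which exhibits the graph $G_{6}$ (and disjoint copies of it) achieving $\gamma^{NN}_{s}(G)=|V(G)|/3$.
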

The upper bound in Theorem \ref{T2.3} is the best possible. To see this fact, let $G_{6}$ be the graph depicted in the following figure. It is easy to see that $\gamma^{NN}_{s}(G_{6})=2$. By taking multiple copies of $G_{6}$, we have infinite collection of cubic graphs $G$ with $\gamma^{NN}_{s}(G)=|V(G)|/3$.

\begin{picture}(269.518,188.518)(0,0)

\put(131,175){\circle*{6}}
\put(160,175){\circle*{6}}
\put(116,157){\circle*{6}}
\put(175,157){\circle*{6}}
\put(131,139){\circle*{6}}
\put(160,139){\circle*{6}}

\multiput(133,176)(.037,0){670}{\line(2,0){.9}}
\multiput(131,139)(.037,0){670}{\line(2,0){.9}}
\multiput(158,177)(.027,-.029){670}{\line(2,0){.9}}
\multiput(115,157)(.024,.031){670}{\line(2,0){.9}}
\multiput(175,157)(-.024,-.029){670}{\line(2,0){.9}}
\multiput(131,139)(-.024,.028){670}{\line(2,0){.9}}

\multiput(115,157)(.084,0){670}{\line(2,0){.9}}
\multiput(131,139)(.043,.056){670}{\line(2,0){.9}}
\multiput(160,139)(-.044,.053){670}{\line(2,0){.9}}

\end{picture}\vspace{-52mm}\\
\begin{center}
The graph $G_{6}$.
\end{center}

%%%%%%%%%%%%%%%%%%%%%%%%%%%%%%%%%%%%%%%%%%%%%%%%%%%%%%%%%%%%%%%%%%%%%%%%%%

\section{($r+1$)-clique-free graphs}

\ \ We need the following well-known theorem of Tur\'{a}n from the extremal graph theory.
%\begin{lemma}\label{L1}\emph{(\cite{m}) (Mantel's Theorem)}
%If $G$ is a triangle-free graph of order $n$, then
%$$|E(G)|‎\leq \lfloor n^{2}/4\rfloor$$
%with equality if and only $G$ is isomorphic to $K_{\lfloor\frac{n}{2}\rfloor,\lceil\frac{n}{2}\rceil}%$.
%\end{lemma}
\begin{lemma}\label{L1}
\emph{(\cite{T}, Tur\'{a}n's Theorem)}. If $G$ is an \emph{(}$r+1$\emph{)}-clique-free graph of order $n$, then
$$|E(G)|\le \frac{r-1}{2r}\cdot n^2,$$
with equality if and only if $G$ is the Tur\'{a}n graph $T_{n,r}$ and $r$ divides $n$.
\end{lemma}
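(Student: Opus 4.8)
The plan is to prove the bound by the extremal (Zykov symmetrization) method, which simultaneously yields the equality characterization. Fix $n$ and $r$, and among all $(r+1)$-clique-free graphs on $n$ vertices choose one, $G$, with the maximum number of edges; it then suffices to show that such a $G$ must be the balanced complete $r$-partite graph and to count its edges. The central structural claim is that in an extremal $G$ the relation ``$u$ and $v$ are non-adjacent'' is an equivalence relation on $V(G)$. Granting this, the equivalence classes are independent sets with all edges present between distinct classes, so $G$ is complete multipartite.

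First I would establish the structural claim, which I expect to be the main obstacle. Reflexivity and symmetry are immediate, so the content is transitivity: suppose $uv,vw\notin E(G)$ but $uw\in E(G)$. I would argue via a vertex-duplication (cloning) operation that preserves $(r+1)$-clique-freeness, the point being that replacing a vertex by a twin of another non-adjacent vertex cannot create a $K_{r+1}$, since two twins are non-adjacent and hence never lie together in a clique. If $\deg(v)<\deg(u)$ (or $\deg(v)<\deg(w)$), then deleting $v$ and inserting a clone of $u$ strictly increases the edge count, contradicting maximality. Otherwise $\deg(v)\ge\deg(u)$ and $\deg(v)\ge\deg(w)$; then deleting both $u$ and $w$ and inserting two clones of $v$ changes the edge count by $2\deg(v)-\deg(u)-\deg(w)+1\ge1>0$, where the $+1$ accounts for the removed edge $uw$ (note $u,w\notin N(v)$, so the clones' neighborhoods are unaffected). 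Either way we reach a contradiction, forcing transitivity.

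With $G$ complete multipartite, I would next fix the number of parts and their sizes. Since a complete $k$-partite graph contains $K_k$, the $(r+1)$-clique-free condition forces at most $r$ parts; and since splitting a part of size at least $2$ only adds edges while keeping the clique number bounded by $r$, the extremal graph has exactly $r$ nonempty parts (assuming $n\ge r$). Writing $n_1,\dots,n_r$ for the part sizes, the edge count is $\binom{n}{2}-\sum_i\binom{n_i}{2}$, so maximizing edges amounts to minimizing $\sum_i\binom{n_i}{2}$. A standard smoothing argument shows that if two parts differ in size by at least $2$, moving one vertex from the larger to the smaller strictly decreases this sum; hence the parts are as equal as possible and $G=T_{n,r}$.

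Finally I would compute the edge count. When $r\mid n$ every part has $n/r$ vertices, so $|E(G)|=\binom{r}{2}(n/r)^2=\frac{r-1}{2r}n^2$, giving both the claimed bound and equality in this case. When $r\nmid n$ the balanced integer sizes make $\sum_i\binom{n_i}{2}$ strictly larger than the value it would take under a perfectly equal split, whence $|E(T_{n,r})|<\frac{r-1}{2r}n^2$; since $T_{n,r}$ is extremal, every $(r+1)$-clique-free graph then satisfies the strict inequality. This delivers the bound for all $n$ together with equality precisely when $G=T_{n,r}$ and $r\mid n$.
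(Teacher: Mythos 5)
Your proposal is correct, but there is nothing in the paper to compare it against: the paper states this lemma as a classical result, citing Tur\'{a}n's 1941 paper, and gives no proof of its own. What you have written is a complete and sound proof via Zykov symmetrization. The key points all check out: cloning a vertex preserves $(r+1)$-clique-freeness because twins are non-adjacent and so no clique can contain two of them; your degree bookkeeping in the transitivity step is right, including the $+1$ for the deleted edge $uw$ and the observation that $u,w\notin N(v)$ keeps the clones' neighborhoods intact; and the resulting strict increases in edge count correctly contradict extremality, forcing non-adjacency to be an equivalence relation. The rest (at most $r$ parts by clique-freeness, exactly $r$ parts by splitting when $n\ge r$, balancing by smoothing, and the computation $\binom{r}{2}(n/r)^2=\frac{r-1}{2r}n^2$) is standard and correct, and your equality analysis works because the symmetrization argument shows \emph{every} edge-maximal graph is $T_{n,r}$, while convexity of $\binom{x}{2}$ makes the bound strict whenever $r\nmid n$. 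The only loose thread is the case $n<r$, which you flag with ``assuming $n\ge r$'': there the extremal graph is $K_n$ and the inequality $\binom{n}{2}\le\frac{r-1}{2r}n^2$ is strict for $n<r$, so the statement still holds; it would be worth one sentence to close this off explicitly. Note also that your approach proves slightly more than the paper uses (the paper only needs the bound $|E(G[V_{+}])|\le(r-1)|V_{+}|^2/2r$ and the equality condition, both of which your argument delivers).
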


The following lower bound was exhibited in \cite{as} for the NNSDN of a bipartite graph $G$ of order $n$.
\begin{equation}\label{EQ9}
\gamma^{NN}_{s}(G)\geq2(-1+\sqrt{1+2n})-n.
\end{equation}
The above inequality is not true as it stands. It is easy to see that the family
$$\Sigma=\{K_{p,p}\circ \overline{K_{p+1}}\ |\ p\geq1\}$$
serves as an infinite family of counterexamples to (\ref{EQ9}) (see Figure $1$).

 In what follows we bound $\gamma^{NN}_s(G)$ from below for all ($r+1$)-clique-free graphs ($r\geq2$). Moreover, as the special case $r=2$ of such graphs, the given lower bound on $\gamma^{NN}_s(G)$ in the next theorem can be considered for a bipartite graph $G$ instead of (\ref{EQ9}).
%\begin{theorem}
%Let $G$ be a triangle-free graph $G$ of order $n$. Then
%$$\gamma^{NN}_s(G)‎‎‎\geq2(-2+‎\sqrt{4+2n}‎)-n,$$
%with equality if and only if $G\in ‎‎‎‎\Sigma‎$.
%\end{theorem}
\begin{theorem}
Let $r\geq2$ be an integer. If $G$ is an \emph{(}$r+1$\emph{)}-clique-free graph of order $n$, then
$$\gamma^{NN}_{s}(G)\geq-\frac{2r}{r-1}+\frac{2}{r-1}\sqrt{r^{2}+r(r-1)n}-n.$$
Furthermore, the equality holds if and only if $G=H\circ \overline{K_{(r-1)p+1}}$ in which $H$ is a complete $r$-partite graph with $p$ vertices in each partite set.
\end{theorem}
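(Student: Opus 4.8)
The plan is to translate the statement into a single quadratic inequality in $|V_+|$ and then extract the bound from Tur\'an's theorem applied to the graph induced by $V_+$. Fix a minimum NNSDF $f$ and write $t=|V_+|$, so that $|V_-|=n-t$ and $\gamma^{NN}_s(G)=\omega(f)=t-(n-t)=2t-n$. Solving the target inequality for $t$ (squaring, which is legitimate since the relevant side is nonnegative), one checks that it is equivalent to the polynomial inequality $(r-1)t^2+2rt\ge rn$. Thus the whole theorem reduces to proving this quadratic bound on $t$, and I would verify at the outset that substituting $t=rp$ and $n=rp((r-1)p+2)$ turns it into an equality, which pins down where the extremal configuration must sit.

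The heart of the argument is a double count of the edge set $[V_-,V_+]$. Applying $f(N[v])\ge 0$ at each $v\in V_-$ gives $|N(v)\cap V_+|-|N(v)\cap V_-|\ge 1$; summing over $V_-$ yields $|[V_-,V_+]|\ge |V_-|+2|E(G[V_-])|$. Applying $f(N[v])\ge 0$ at each $v\in V_+$ gives $|N(v)\cap V_-|-|N(v)\cap V_+|\le 1$; summing over $V_+$ yields $|[V_-,V_+]|\le t+2|E(G[V_+])|$. Chaining the two estimates and discarding the nonnegative term $2|E(G[V_-])|$ produces $|V_-|-t\le 2|E(G[V_+])|$. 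Now $G[V_+]$ is an induced subgraph of $G$, hence $(r+1)$-clique-free of order $t$, so Tur\'an's Theorem (Lemma \ref{L1}) gives $|E(G[V_+])|\le \frac{r-1}{2r}t^2$. Substituting and using $|V_-|=n-t$ turns $|V_-|-t\le 2|E(G[V_+])|$ into $n-2t\le \frac{r-1}{r}t^2$, i.e. exactly $(r-1)t^2+2rt\ge rn$. From here I would finish by noting that $q(t)=(r-1)t^2+2rt-rn$ is an upward parabola with $q(0)=-rn<0$, so $q(t)\ge 0$ together with $t\ge 0$ forces $t$ to be at least the positive root $\frac{-r+\sqrt{r^2+r(r-1)n}}{r-1}$; since $\gamma^{NN}_s(G)=2t-n$ is increasing in $t$, this reproduces the claimed lower bound.

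For the equality characterization I would trace the inequalities backward. Equality forces: (a) $f(N[v])=0$ at every vertex, i.e. both per-vertex estimates above are tight; (b) $|E(G[V_-])|=0$, so $V_-$ is independent; and (c) equality in Tur\'an, so by Lemma \ref{L1}, $G[V_+]$ is the Tur\'an graph $T_{t,r}$ with $r\mid t$, say $t=rp$ and $G[V_+]=H$ complete $r$-partite with $p$ vertices per part. Conditions (a) and (b) then show each $v\in V_-$ has exactly one neighbour, lying in $V_+$ (so $V_-$ is a set of leaves), while tightness at $v\in V_+$ together with $\deg_H(v)=(r-1)p$ forces each vertex of $H$ to carry exactly $(r-1)p+1$ such leaves; this is precisely $G=H\circ \overline{K_{(r-1)p+1}}$. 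Conversely, for such $G$ the function assigning $1$ to $V(H)$ and $-1$ to all leaves is an NNSDF of weight $-(r-1)rp^2$, which matches the bound evaluated at $n=rp((r-1)p+2)$, so equality holds.

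I expect the main obstacle to lie in this last step: converting the tightness conditions (a)--(c) into the exact corona description (in particular ruling out other incidence patterns between $V_-$ and $V_+$) and confirming that the constructed $f$ is genuinely optimal rather than merely feasible. The lower bound itself should be routine once the double count is set up correctly; the delicate bookkeeping is entirely in the characterization.
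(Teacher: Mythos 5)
Your proposal is correct and follows essentially the same route as the paper: bound $|V_-|\le|[V_-,V_+]|\le |V_+|+2|E(G[V_+])|$ via the per-vertex conditions $f(N[v])\ge 0$, apply Tur\'an's theorem to $G[V_+]$, solve the resulting quadratic $(r-1)t^2+2rt\ge rn$ in $t=|V_+|$, and trace the forced equalities to obtain the corona characterization. Your only (harmless) deviation is retaining the term $2|E(G[V_-])|$ before discarding it, which makes the independence of $V_-$ in the equality case fall out slightly more directly than the paper's argument, where it is deduced from each $V_-$-vertex having exactly one $V_+$-neighbour combined with $f(N[v])\ge 0$.
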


\begin{proof}
Let $f$ be a minimum NNSDF of $G$. Then, each vertex $v$ in $V_{-}$ has at least one neighbor in $V_{+}$. Also, $|N(v)\cap V_{-}|\leq|N(v)\cap V_{+}|+1$ for all $v\in V_{+}$. Moreover, by lemma \ref{L1} we have \begin{equation}\label{EQ1}
\begin{array}{lcl}
|V_{-}|&\leq& |[V_{-},V_{+}]|=\sum_{v\in V_{+}}|N(v)\cap V_{-}|\leq \sum_{v\in V_{+}}(|N(v)\cap V_{+}|+1)\\
&=&2|E(G[V_{+}])|+|V_{+}|\leq(r-1)|V_{+}|^2/r+|V_{+}|.
\end{array}
\end{equation}
This follows that
$$(r-1)|V_{+}|^2/r+2|V_{+}|-n\geq0.$$
Solving the above inequality for $|V_{+}|$ we obtain
$$(n+\gamma^{NN}_{s}(G))/2=|V_{+}|\geq\frac{r}{r-1}(-1+\sqrt{1+(r-1)n/r}).$$
This implies the desired lower bound.

Let $G=H\circ \overline{K_{(r-1)p+1}}$. Then, the function $g$ assigning $1$ to the vertices in $V(H)$ and $-1$ to the other vertices defines an NNSDF with weight $-r^{2}p^{2}+rp^{2}$. Therefore, $\gamma^{NN}_s(G)\leq-r^{2}p^{2}+rp^{2}$. On the other hand, since $n=rp+rp((r-1)p+1))$, we have
$$\gamma^{NN}_{s}(G)\geq-\frac{2r}{r-1}+\frac{2}{r-1}\sqrt{r^{2}+r(r-1)n}-n=-r^{2}p^{2}+rp^{2}.$$
Therefore, $\gamma^{NN}_{s}(G)=-r^{2}p^{2}+rp^{2}$.

Conversely, suppose that the equality holds. Then $|V_{-}|=|[V_{-},V_{+}]|$,
$|N(v)\cap V_{-}|=|N(v)\cap V_{+}|+1$ for all
$v\in V_{+}$, and $|E(G[V_{+}])|=(r-1)|V_{+}|^2/2r$, by (\ref{EQ1}).
This implies that every vertex in $V_{-}$ has exactly one neighbor in $V_{+}$ (and therefore the vertices in
$V_{-}$ are independent) and every vertex $v$ in $V_{+}$ has exactly
$|N(v)\cap V_{+}|+1$ neighbors in $V_{-}$. Moreover, $G[V_{+}]$ is the Tur\'{a}n
graph $T_{|V_{+}|,r}$ in which $r\mid|V_{+}|$. This implies that each partite set of it has the cardinality $|V_{+}|/r$.
Thus, $G=G[V_{+}]\circ\overline{K_{(r-1)|V_{+}|/r+1}}$. This completes the proof.
\end{proof}\vspace{23mm}

\begin{picture}(269.518,188.518)(0,0)
\put(102,179){\circle*{6}}
\put(147,179){\circle*{6}}
\put(102,211){\circle*{6}}
\put(147,211){\circle*{6}}
\put(192,179){\circle*{6}}
\put(192,211){\circle*{6}}

\put(216,225){\circle*{6}}
\put(208,232){\circle*{6}}
\put(200,238){\circle*{6}}
\put(192,244){\circle*{6}}

\put(165,244){\circle*{6}}
\put(154,244){\circle*{6}}
\put(142,244){\circle*{6}}
\put(131,244){\circle*{6}}

\put(165,147){\circle*{6}}
\put(154,147){\circle*{6}}
\put(142,147){\circle*{6}}
\put(131,147){\circle*{6}}

\put(216,159){\circle*{6}}
\put(208,154){\circle*{6}}
\put(200,150){\circle*{6}}
\put(192,146){\circle*{6}}

\put(78,224){\circle*{6}}
\put(84,231){\circle*{6}}
\put(91,237){\circle*{6}}
\put(100,243){\circle*{6}}

\put(99,148){\circle*{6}}
\put(91,152){\circle*{6}}
\put(84,156){\circle*{6}}
\put(75,160){\circle*{6}}

\multiput(147,211)(.024,.048){670}{\line(2,0){.9}}
\multiput(147,211)(-.024,.047){670}{\line(2,0){.9}}
\multiput(147,211)(.009,.048){670}{\line(2,0){.9}}
\multiput(147,211)(-.008,.048){670}{\line(2,0){.9}}

\multiput(102,179)(-.004,-.049){670}{\line(2,0){.9}}
\multiput(102,179)(-.04,-.026){670}{\line(2,0){.9}}
\multiput(102,179)(-.02,-.043){670}{\line(2,0){.9}}
\multiput(102,179)(-.027,-.03){670}{\line(2,0){.9}}

\multiput(147,179)(-.025,-.049){670}{\line(2,0){.9}}
\multiput(147,179)(.025,-.049){670}{\line(2,0){.9}}
\multiput(147,179)(-.009,-.049){670}{\line(2,0){.9}}
\multiput(147,179)(.01,-.049){670}{\line(2,0){.9}}

\multiput(192,211)(.037,.022){670}{\line(2,0){.9}}
\multiput(192,211)(-.002,.052){670}{\line(2,0){.9}}
\multiput(192,211)(.025,.033){670}{\line(2,0){.9}}
\multiput(192,211)(.011,.04){670}{\line(2,0){.9}}

\multiput(102,211)(-.005,.052){670}{\line(2,0){.9}}
\multiput(102,211)(-.04,.021){670}{\line(2,0){.9}}
\multiput(102,211)(-.019,.04){670}{\line(2,0){.9}}
\multiput(102,211)(-.029,.03){670}{\line(2,0){.9}}

\multiput(191,179)(-.002,-.052){670}{\line(2,0){.9}}
\multiput(191,179)(.04,-.032){670}{\line(2,0){.9}}
\multiput(191,179)(.012,-.042){670}{\line(2,0){.9}}
\multiput(191,179)(.026,-.04){670}{\line(2,0){.9}}

\multiput(191,179)(0,.052){670}{\line(2,0){.9}}
\multiput(191,179)(-.069,.048){670}{\line(2,0){.9}}
\multiput(191,179)(-.14,.05){670}{\line(2,0){.9}}

\multiput(102,179)(0,.052){670}{\line(2,0){.9}}
\multiput(102,179)(.07,.052){670}{\line(2,0){.9}}
\multiput(102,179)(.142,.052){670}{\line(2,0){.9}}

\multiput(147,179)(-.076,.052){670}{\line(2,0){.9}}
\multiput(147,179)(-.001,.052){670}{\line(2,0){.9}}
\multiput(147,179)(.074,.052){670}{\line(2,0){.9}}

\put(105,171){$1$}
\put(152,174){$1$}
\put(105,211){$1$}
\put(151,209){$1$}
\put(184,171){$1$}
\put(185,213){$1$}

\put(217,227){$-1$}
\put(209,234){$-1$}
\put(201,240){$-1$}
\put(193,246){$-1$}

\put(154,248){$-1$}
\put(143,248){$-1$}
\put(131,248){$-1$}
\put(120,248){$-1$}

\put(156,137){$-1$}
\put(144,137){$-1$}
\put(132,137){$-1$}
\put(121,137){$-1$}

\put(218,151){$-1$}
\put(206,145){$-1$}
\put(197,140){$-1$}
\put(185,137){$-1$}

\put(62,225){$-1$}
\put(68,232){$-1$}
\put(77,240){$-1$}
\put(86,246){$-1$}

\put(89,137){$-1$}
\put(81,142){$-1$}
\put(72,146){$-1$}
\put(62,150){$-1$}

\end{picture}\vspace{-52mm}\\
\begin{center}
Figure $1$. A member of $\Sigma$ for $p=3$ with $\gamma^{NN}_{s}(G)=-18$.
\end{center}

%%%%%%%%%%%%%%%%%%%%%%%%%%%%%%%%%%%%%%%%%%%%%%%%%%%%%%%%%%%%%%%%%%%%%%%%%%%%%%%%%%%%%%%%%%%%%%%%%%

\section{Trees}

\ \ It has been proved by Henning in \cite{h} that $\alpha_{s}^{2}(T)\geq0$, for any tree $T$. Hence, $\alpha_{s}^{2}(T)$ is bounded from below not depending on the order or any other parameters, for any tree $T$. A similar result cannot be presented for $\gamma^{NN}_{s}(T)$. In fact, the following observation shows that $\gamma^{NN}_{s}(T)$ is not bounded from both above and below.
\begin{Observation}
For any integer $k$, there is a tree $T$ with $\gamma^{NN}_{s}(T)=k$.
\end{Observation}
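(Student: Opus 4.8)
The plan is to realize every integer by exhibiting two explicit one-parameter families of trees and computing their nonnegative signed domination number exactly. The starting point is that for any NNSDF $f$ of a tree of order $n$ one has $\omega(f)=|V_{+}|-|V_{-}|=n-2|V_{-}|$, so computing $\gamma^{NN}_{s}$ amounts to maximizing $|V_{-}|$ over all NNSDFs. This splits the task: to reach large positive values I need trees in which very few vertices can be made negative, while to reach large negative values I need trees admitting an NNSDF with $|V_{-}|$ close to $n/2$.

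For the positive (and zero) values I would use paths. First I would show that in any NNSDF of $P_{n}=v_{1}\cdots v_{n}$ the vertices of $V_{-}$ are pairwise at distance at least $3$: two negative vertices at distance $1$ or $2$ always lie in a common closed neighborhood (a window of two or three consecutive vertices), whose weight would then be negative. Conversely, assigning $-1$ to a set of vertices that are pairwise at distance at least $3$ and $+1$ elsewhere is an NNSDF, since every closed neighborhood then contains at most one negative vertex. Hence $\max|V_{-}|=\lceil n/3\rceil$ and $\gamma^{NN}_{s}(P_{n})=n-2\lceil n/3\rceil$. Specializing to $n=3k$ gives $\gamma^{NN}_{s}(P_{3k})=k$, which realizes every integer $k\ge 1$.

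For the non-positive values I would use the ``double-pendant spider'' $B_{m}$: a center $c$ joined to vertices $v_{1},\dots,v_{m}$, each $v_{i}$ carrying exactly two leaves $a_{i},b_{i}$. Assigning $+1$ to $c$ and all $v_{i}$ and $-1$ to all $2m$ leaves is an NNSDF (each leaf and each $v_{i}$ has closed-neighborhood weight $0$, and $c$ has weight $1+m$), so $|V_{-}|$ can equal $2m$ and thus $\gamma^{NN}_{s}(B_{m})\le 1-m$. The substantive step is the reverse inequality: one checks locally that no unit $\{v_{i},a_{i},b_{i}\}$ can contain three negatives (a negative leaf forces $f(v_{i})=1$), and that setting $f(c)=-1$ forces each unit to contain at most one negative vertex, which is never advantageous for $m\ge 1$. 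This yields $\max|V_{-}|=2m$, hence $\gamma^{NN}_{s}(B_{m})=1-m$, realizing every integer $k\le 0$ by taking $m=1-k$.

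The main obstacle in both families is the minimality argument, i.e.\ the upper bound on $|V_{-}|$ (equivalently the lower bound on the weight); the constructions producing the claimed functions are immediate, and the care lies in ruling out any denser packing of negatives. The key structural fact making this manageable is that assigning $-1$ to a leaf forces its support vertex to $+1$, while assigning $-1$ to a high-degree vertex severely constrains its neighbors, so the exhibited functions are optimal. Combining the two families covers all of $\mathbb{Z}$, which proves the observation.
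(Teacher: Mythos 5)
Your proposal is correct and follows essentially the same route as the paper: paths $P_{3k}$ (via the formula $\gamma^{NN}_{s}(P_{n})=n-2\lceil n/3\rceil$, which the paper cites from \cite{hfx} but you prove directly with the distance-$3$ packing argument) realize the positive values, and trees built from support vertices each carrying two pendant leaves realize the nonpositive ones. The only cosmetic differences are that you attach the double-leaf units to a star center (your $B_{m}$, which also covers $k=0$ via $B_{1}=K_{1,3}$) where the paper uses the corona $P_{|k|}\circ\overline{K_{2}}$ for $k<0$ and the stars $K_{1,2t-1}$ for $k=0$, and that your explicit optimality arguments (a negative leaf forcing its support to $+1$, and the case analysis on $f(c)$) spell out the minimality the paper asserts, making the proof self-contained.
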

\begin{proof}
Let $k<0$. Consider $T=P_{|k|}\circ \overline{K_{2}}$. Assigning $-1$ to the leaves and $1$ to the support vertices gives a minimum NNSDF of $T$ with weight $\gamma^{NN}_{s}(T)=k$. On the other hand, it is easy to see that $\gamma^{NN}_{s}(K_{1,2t-1})=0$ for each positive integer $t$. Moreover, $\gamma^{NN}_{s}(P_{n})=n-2\lceil n/3\rceil$ (see \cite{hfx}) shows that $\gamma^{NN}_{s}(P_{3k})=k$, for each positive integer $k$.
\end{proof}
Let $\Theta$ be the collection of all trees $T$ with maximum degree $\Delta(T)$ formed from the star $K_{1,\Delta(T)}$,
with a central vertex $u$ of maximum degree, for which all vertices in $V(T)\setminus N[u]$ are leaves with their support
vertices in a set $S\subseteq N(u)$ with $|S|=\lfloor\frac{\Delta}{2}\rfloor$. Moreover, deg$(s)\leq3$ for all $s\in S$.

\begin{theorem}\label{T1}
For any graph $G$ of order $n$ with maximum degree $\Delta$, $\gamma^{NN}_{s}(G)\geq-n+2\lceil\frac{\Delta+1}{2}\rceil$. In particular, the equality holds for a tree $T$ if and only if $T\in \Theta$.
\end{theorem}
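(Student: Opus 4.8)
The plan is to start from an arbitrary NNSDF $f$ of $G$ and examine a vertex $u$ of maximum degree. Since $f(N[u])\geq 0$ and $|N[u]|=\Delta+1$, the closed neighborhood of $u$ must contain at least as many $+1$'s as $-1$'s; writing $|N[u]\cap V_+|=a$ and $|N[u]\cap V_-|=b$ with $a+b=\Delta+1$ and $a\geq b$, we get $a\geq\lceil\frac{\Delta+1}{2}\rceil$. The key observation is that all of these $a$ vertices assigned $+1$ lie inside $N[u]$, so their contribution to the weight is at least $\lceil\frac{\Delta+1}{2}\rceil$. For the crude global bound I would simply note $\omega(f)=|V_+|-|V_-|=n-2|V_-|$, and I want a lower bound, so I instead bound $\omega(f)$ by separating the $+1$'s inside $N[u]$ from everything else: every other vertex contributes at least $-1$, giving $\omega(f)\geq a\cdot 1+(n-a)\cdot(-1)=2a-n\geq -n+2\lceil\frac{\Delta+1}{2}\rceil$. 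Taking the minimum over all NNSDFs yields $\gamma^{NN}_s(G)\geq -n+2\lceil\frac{\Delta+1}{2}\rceil$, which holds for \emph{every} graph, as claimed.

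\textbf{The equality characterization for trees.}
The harder direction is the characterization $T\in\Theta$. For the ``if'' direction I would take $T\in\Theta$, assign $+1$ to $u$ and to the $\lfloor\Delta/2\rfloor$ support vertices in $S$, and $-1$ to everything else, then verify directly that this is an NNSDF of weight exactly $-n+2\lceil\frac{\Delta+1}{2}\rceil$ (here $|V_+|=1+\lfloor\Delta/2\rfloor=\lceil\frac{\Delta+1}{2}\rceil$), which combined with the lower bound forces equality. For the ``only if'' direction I would trace back through the inequality chain: equality forces $a=\lceil\frac{\Delta+1}{2}\rceil$ \emph{and} every vertex outside $N[u]$ to receive $-1$. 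The degree condition $\deg(s)\leq 3$ for support vertices and the leaf structure should then fall out of the local NNSDF constraints $f(N[v])\geq 0$ applied at the support vertices and at $u$ itself.

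\textbf{Main obstacle.}
The main difficulty is the ``only if'' direction, where I must rule out all tree structures other than those in $\Theta$. The subtle point is that equality in $2a-n$ requires simultaneously that $u$ achieves the extremal split in its closed neighborhood and that \emph{no} vertex outside $N[u]$ is assigned $+1$ by the minimum NNSDF. I expect the argument to proceed by showing first that $T$ cannot have a vertex at distance $\geq 2$ from $u$ that is not a leaf (otherwise one is forced to place a $+1$ outside $N[u]$ to satisfy the NNSDF constraint there, contradicting extremality), establishing the star-plus-pendant-paths shape. The remaining work is combinatorial bookkeeping: counting how many neighbors of $u$ must be assigned $+1$, pinning down $|S|=\lfloor\Delta/2\rfloor$ from the parity of $\Delta+1$, and deriving $\deg(s)\leq 3$ by checking the local constraint at each support vertex $s$, since a support vertex of degree $\geq 4$ with too many $-1$ leaves would violate $f(N[s])\geq 0$ unless it were itself promoted to $+1$ along with extra neighbors, again breaking the count. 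I would organize this as a sequence of short claims, each using a single application of the defining inequality $f(N[v])\geq 0$ at a carefully chosen vertex.
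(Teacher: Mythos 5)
Your proposal is correct and follows essentially the same route as the paper: the identical lower-bound count (at least $\lceil\frac{\Delta+1}{2}\rceil$ vertices of $N[u]$ lie in $V_+$, so $\gamma^{NN}_s(G)=2|V_+|-n\geq-n+2\lceil\frac{\Delta+1}{2}\rceil$), the same extremal assignment ($+1$ on $S\cup\{u\}$, $-1$ elsewhere) for trees in $\Theta$, and the same equality analysis forcing $|V_+|=\lceil\frac{\Delta+1}{2}\rceil$ and $V_+\subseteq N[u]$, then applying $f(N[v])\geq0$ vertex by vertex to make every vertex outside $N[u]$ a leaf with support in $S$, to pin $|S|=\lfloor\frac{\Delta}{2}\rfloor$, and to get $\deg(s)\leq3$. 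The only details your sketch leaves implicit are the step $f(u)=1$ --- which the paper deduces for $\Delta\geq3$ from $\lceil\frac{\Delta+1}{2}\rceil<\Delta$, giving a $-1$ neighbor $w$ of $u$ whose constraint $f(N[w])\geq0$ forces $f(u)=1$ --- and the separate small cases $\Delta\in\{1,2\}$, which the paper disposes of by hand before running the generic argument.
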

\begin{proof}
Let $f$ be a minimum NNSDF of $G$ and $u$ be a vertex of the maximum degree in $V(G)$. Since $f(N[u])\geq0$, at least $\lceil\frac{\Delta+1}{2}\rceil$ vertices in $N[u]$ belong to $V_{+}$. This implies that\\
$$\gamma^{NN}_{s}(G)=f(V(G))=-n+2|V_{+}|\geq-n+2\left\lceil\frac{\Delta+1}{2}\right\rceil.$$

Suppose that the equality holds for a tree $T$. This shows that $|V_{+}|=\lceil\frac{\Delta+1}{2}\rceil$ and therefore
$V_{+}\subseteq N[u]$. If $\Delta=1$, then $T=K_{2}$. Moreover, it is easy to see that $G$ is formed from $K_{1,2}$ by
adding at most one pendant edge to just one leaf of $K_{1,2}$ if $\Delta=2$. In each of the two cases $T\in \Theta$. So, we may assume that $\Delta\geq3$. Since $\lceil\frac{\Delta+1}{2}\rceil<\Delta$, there exists a vertex $w$ in $N(u)$ with $f(w)=-1$. Therefore, $f(u)=1$ and the other $\lceil\frac{\Delta+1}{2}\rceil-1=\lfloor\frac{\Delta}{2}\rfloor$ vertices of $V_{+}$ appears in $N(u)$. Since $T$ is a tree, the subset $N(u)$ is independent and every vertex in $V(G)\setminus N[u]$ has at most one neighbor in $N(u)$. On the other hand, the condition $f(N[v])\geq0$ for each vertex $v$, implies that every vertex not in $N[u]$ has exactly one neighbor in $S=N(u)\cap V_{+}$, with $|S|=\lfloor\frac{\Delta}{2}\rfloor$. Furthermore, every vertex in $S$ has at most two neighbors except $u$ and all vertices in $V(G)\setminus N[u]$ are leaves with their support vertices in $S$. Hence, $T\in \Theta$.

Now let $T\in \Theta$ and $u$ be a vertex of maximum degree. It is easy to see that the function $f$ assigning 1 to all vertices of $S\cup\{u\}$ and $-1$ to the other vertices defines an NNSDF of $T$ with weight $-n+2\lceil\frac{\Delta+1}{2}\rceil$. So, $\gamma^{NN}_{s}(T)\leq-n+2\lceil\frac{\Delta+1}{2}\rceil$. This completes the proof.
\end{proof}

Note that the lower bound given in Theorem \ref{T1} is sharp not only for all trees in $\Theta$ but for some other collections of graphs, for example the complete graphs.

For a tree $T$, let $L(T)$ and $S(T)$ be the set of leaves and support vertives, respectively. For any support vertex $v$ of $T$ consider $L_{v}$ as the set of all leaves adjacent to $v$ and $\ell_{v}=|L_{v}|$.\\
For characterizing all trees attaining the next bound we introduce $\Omega$ to be the collection of all trees $T$ satisfying:\vspace{.35mm}\\
(i) $T$ is a star with even number of leaves,\vspace{.4mm}\\
or\vspace{.4mm}\\
(ii) $T$ has no support vertex with even number of leaves and $S(T)$ is a dominating set in $T$ in which everey vertex has
at most one neighbor in $V(T)\setminus L(T)$.
\begin{theorem}
Let $T$ be a tree of order $n\geq3$ with $\ell$ leaves and $s'$ be the number of support vertices with odd number of leaves. Then, $\gamma^{NN}_{s}(T)\leq n-\ell+s'$. Moreover, the equality holds if and only if $T\in \Omega$.	
\end{theorem}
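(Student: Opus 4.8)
The plan is to prove the inequality by exhibiting one explicit NNSDF of weight exactly $n-\ell+s'$, and then to characterise equality by analysing how tightly that function is forced to be optimal. First I would fix the partition $V(T)=L(T)\cup S(T)\cup I$, where $I=V(T)\setminus(L(T)\cup S(T))$ is the set of internal non-support vertices; since $n\ge 3$ we have $T\ne K_2$, so every leaf has a unique support vertex, $L(T)\cap S(T)=\emptyset$, and $\sum_{v\in S(T)}\ell_v=\ell$. I would then define $f\colon V(T)\to\{-1,1\}$ by setting $f\equiv 1$ on $S(T)\cup I$ and, for each support vertex $v$, assigning $-1$ to an arbitrary subset of $\lfloor \ell_v/2\rfloor$ of its $\ell_v$ leaves and $1$ to the remaining $\lceil \ell_v/2\rceil$ leaves.

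Next I would verify that $f$ is an NNSDF by checking the three vertex types. For a leaf $u$ with support $v$ one has $f(N[u])=f(u)+f(v)=f(u)+1\ge 0$. For a support vertex $v$ the leaf contributions telescope to $\lceil \ell_v/2\rceil-\lfloor \ell_v/2\rfloor=\ell_v\bmod 2$, and every non-leaf neighbour of $v$ carries $1$, so $f(N[v])=1+(\ell_v\bmod 2)+(\deg(v)-\ell_v)\ge 1$. For $w\in I$ every vertex of $N[w]$ carries $1$, so $f(N[w])=\deg(w)+1\ge 1$. Hence $f$ is an NNSDF, and since $|V_-|=\sum_{v\in S(T)}\lfloor \ell_v/2\rfloor=(\ell-s')/2$, its weight is $n-2|V_-|=n-\ell+s'$. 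This yields $\gamma^{NN}_s(T)\le n-\ell+s'$.

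For the equality characterisation I would argue in two directions. For the forward direction I would take a minimum NNSDF realising the bound and push every inequality used in the construction to equality: the weight being exactly $n-\ell+s'$ should force that no support vertex is coloured $-1$, that each support carries precisely $\lfloor \ell_v/2\rfloor$ negative leaves, and that the sums $f(N[v])$ at the support vertices carry no surplus, which in turn constrains how $S(T)$ meets $I$. Reading these constraints off in a tree should leave only the two configurations defining $\Omega$: an even star (case (i)), or a tree with no even-leaf support in which $S(T)$ dominates $T$ and each support has at most one neighbour outside $L(T)$ (case (ii)). For the converse I would start from an arbitrary $T\in\Omega$ and show that no NNSDF is lighter than $n-\ell+s'$ by comparing any candidate function against the one built above.

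The step I expect to be the main obstacle is precisely this converse half, that trees in $\Omega$ admit no strictly lighter NNSDF. The upper bound is routine once the function is written down, but the matching lower bound for $T\in\Omega$ needs a delicate parity and packing argument at the support vertices: one must rule out recovering weight by flipping a support vertex together with a recolouring of its leaf set, or by exploiting the dominating structure of $S(T)$. It is exactly in weighing the parity count $s'$ against the closed-neighbourhood sums $f(N[v])$ that the contribution of $s'$ to the bound is decided, so I would treat this as the crux and handle the parities of the $\ell_v$ one support class at a time.
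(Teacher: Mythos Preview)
There is a genuine gap, and it originates in a sign typo in the displayed statement: the bound actually proved in the paper (and announced in the introduction) is $\gamma^{NN}_s(T)\le n-\ell-s'$, not $n-\ell+s'$. Your explicit function assigns $-1$ to only $\lfloor\ell_v/2\rfloor$ leaves at each support $v$, which is one too few when $\ell_v$ is odd. Replacing $\lfloor\ell_v/2\rfloor$ by $\lceil\ell_v/2\rceil$ still gives a valid NNSDF (leaves: $f(N[u])=f(u)+1\ge 0$; supports: $f(N[v])=1-(\ell_v\bmod 2)+(\deg(v)-\ell_v)\ge 0$; internal vertices: $f(N[w])=\deg(w)+1$), and its weight is $n-2\sum_v\lceil\ell_v/2\rceil=n-\ell-s'$. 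With the weaker bound you wrote down, the equality characterisation is simply false: for instance $P_4\in\Omega$ (case~(ii)) has $\gamma^{NN}_s(P_4)=0$, while $n-\ell+s'=4$.

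On strategy, your constructive route to the inequality is perfectly legitimate and arguably cleaner than the paper's, which instead takes an arbitrary minimum NNSDF and massages it (pushing $-1$'s onto leaves, then showing each support must carry at least $\lceil\ell_v/2\rceil$ negative leaves) to obtain $|V_-|\ge(\ell+s')/2$. The price of the shortcut appears in the equality analysis. Your plan to ``push every inequality used in the construction to equality'' does not apply, because your construction is a single explicit function, not a chain of estimates on a minimum NNSDF. To show $\gamma^{NN}_s(T)=n-\ell-s'$ forces $T\in\Omega$, you still need exactly what the paper does: normalise a minimum NNSDF so that every support is at $+1$ with precisely $\lceil\ell_v/2\rceil$ negative leaves and everything else at $+1$, and then exhibit a strictly lighter NNSDF whenever some support has even $\ell_v$ (flip one more of its leaves), or some non-support non-leaf has no support neighbour (flip it), or some support has two non-leaf neighbours (flip one more of its leaves). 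The converse for $T\in\Omega$ likewise needs this normalised form, after which the constraint $f(N[v])\ge 0$ at each support forces the leaf count exactly and rules out any $-1$ on $I$.
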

\begin{proof}
Let $f$ be a minimum NNSDF of $T$ and $u$ be a support vertex. Then there exists a vertex $u'$ in $N[u]$ with $f(u')=-1$, for otherwise the function $f'$ assigning $-1$ to a leaf of the support vertex $u$ and $f'(x)=f(x)$ to the other vertices $x$ is an NNSDF with the weight $f'(V(T))<f(V(T))$, a contradiction. Also, without loss of generality, we may assume that $L_{u}\cap V_{-}$ is not empty. Otherwise the function $p$ assigning $-1$ to a leaf of $u$, $1$ to $u'$ and $p(x)=f(x)$ to the other vertices $x$ would be an NNSDF with $p(V(T))=\gamma^{NN}_{s}(T)$ (if $f(u)=-1$, then we consider $u'$ as $u$). Thus, we may always assume that $L_{u}\cap V_{-}\neq \emptyset$ and $f(u)=1$, for all support vertices $u$ of $T$.

Suppose that there exists a support vertex $v$ which is adjacent to at most $\lceil\frac{\ell_{v}}{2}\rceil-1$ vertices in $V_{-}$. Then, $f(N[v])\geq|N[v]|-2(\lceil \ell_{v}/2\rceil-1)\geq2$. Let $v'$ be a leaf adjacent to $v$ with $f(v')=1$. Then, it is easy to see that the function defined by $g(v')=-1$ and $g(x)=f(x)$ for each $x\in V(T)\setminus\{v'\}$ is an NNSDF of $T$ with weight $g(V(T))<f(V(T))$, a contradiction. Therefore, every support vertex $v$ is adjacent to at least $\lceil\frac{\ell_{v}}{2}\rceil$ neighbors in $V_{-}$. Moreover, without loss of generality, we may assume that these $\lceil\frac{\ell_{v}}{2}\rceil$ neighbors belong to $L_{v}$.

Let $S(T)=\{v_{1},...,v_{s}\}$ and $S'=\{v_{1},...,v_{s'}\}$ be the set of support vertices with odd number of leaves.
Then\\
$\gamma^{NN}_{s}(T)=n-2|V_{-}|\leq n-2\left(\lceil\frac{\ell_{v_{1}}}{2}\rceil+...
+\lceil\frac{\ell_{v_{s'}}}{2}\rceil+\lceil\frac{\ell_{v_{s'+1}}}{2}\rceil+...+\lceil\frac{\ell_{v_{s}}}{2}\rceil\right)$

\begin{center}$=n-2\left(\frac{\ell_{v_{1}}+1}{2}+...+\frac{\ell_{v_{s'}}+1}{2}+\frac{\ell_{v_{s'+1}}}{2}+...
+\frac{\ell_{v_{s}}}{2}\right)=n-(\ell+s').$\end{center}

Let $\gamma^{NN}_s(T)=n-\ell-s'$ and $f$ be a minimum NNSDF of $T$ which assigns $-1$ to exactly $\lceil\frac{\ell_{v}}{2}\rceil$ leaves for each support vertex $v$ and $1$ to other vertices. Assume that $T$ is not a star with even number of leaves. We prove that $T$ satisfies (ii). If $T$ has a support vertex $v$ with even number of leaves, then the function $g$ assigning $-1$ to exactly $\lceil\frac{\ell_{v}}{2}\rceil+1$ leaves of $v$, $1$ to its other leaves and $g(x)=f(x)$ to the other vertices $x$ would be an NNSDF of $T$ with weight $g(V(T))<f(V(T))$ contradicting the fact that $f(V(T))=\gamma^{NN}_s(T)$. Therefore, $T$ has no support vertex with even number of leaves. We now show that $S(T)$ is a dominating set in $T$. If a vertex $w$ in $V(T)\setminus(S(T)\cup L(T))$ has no neighbor in $S(T)$, then $k(w)=-1$ and $k(x)=f(x)$ for $x\in V(T)\setminus\{w\}$ would be an NNSDF with $k(V(T))< f(V(T))$. This contradiction implies that $S(T)$ is a dominating set. Suppose to the contrary that there exists a vertex $u\in S(T)$ such that $|N(u)\cap(V(T)\setminus L(T))|\geq2$. Then the function $h$ assigning $-1$ to exactly $\lceil\frac{\ell_{u}}{2}\rceil+1$ leaves of $u$, $1$ to its other leaves and $h(x)=f(x)$ for $x\in V(T)\setminus L_{u}$ defines an NNSDF of $T$ with weight $h(V(T))<f(V(T))$, a contradiction. The above argument shows that $T$ satisfies (ii).

Suppose that $T\in \Omega$ and $f$ is a minimum NNSDF of $T$ which assigns $1$ to $v$ and $-1$ to at least $\lceil\frac{\ell_{v}}{2}\rceil$ leaves of each support vertex $v$. Both (i) and (ii) shows that $f$ assigns $-1$ to exactly $\lceil\frac{\ell_{v}}{2}\rceil$ leaves of each support vertex $v$. Now let $f(u)=-1$ for some vertex $u$ in $V(T)\setminus(S(T)\cup L(T))$. Since $S(T)$ dominates $V(T)$, there exists a vertex $v\in S(T)$ adjacent to $u$. Then (ii) implies that $N(v)\cap(V(T)\setminus(S(T)\cup L(T)))=\{u\}$ and hence $f(N[v])\leq-1$, a contradiction. Therefore, $f$ assigns $1$ to all vertices in $V(T)\setminus(S(T)\cup L(T))$. Thus, $\gamma^{NN}_s(T)=n-\ell-s'$. This completes the proof.
\end{proof}

%%%%%%%%%%%%%%%%%%%%%%%%%%%%%%%%%%%%%%%%%%%%%%%%%%%%%%%%%%%%%%%%%%%%%%%%%%%%%%%%%%%%%%%%%%%%%

\section{Concluding remarks}

\ \ As applications of the concepts of limited packing and tuple domination we exhibited sharp lower and upper bounds
on NNSDN of regular graphs. We made use of  Tur\'{a}n's Theorem for bounding the NNSDN of $(r+1)$-clique-free
graphs from below, and bounded this parameter for trees and characterized all graphs (trees) attainig the bounds.
It is worth giving exact formulas or bounds for this parameter of some other certain families of graphs. For example,
grids, nearly regular graphs, claw-free graphs, etc. We now conclude the paper with the following problem:\vspace{0.5mm}\\
\textbf{Problem.} How can we classify the other families of graphs by NNSDN?

%%%%%%%%%%%%%%%%%%%%%%%%%%%%%%%%%%%%%%%%%%%%%%%%%%%%%%%%%%%%%%%%%%%%%%%%%%%%%%%%%%%%%%%%%%%%%%%

\end{document}